\newtheorem{theorem}{Theorem}[section]
\newtheorem{lemma}[theorem]{Lemma}
\newtheorem{proposition}[theorem]{Proposition}
\newtheorem{claim}{Claim}[theorem]
\numberwithin{equation}{section}
\theoremstyle{definition}
\newtheorem{definition}[theorem]{Definition}
\theoremstyle{remark}
\tikzstyle{vertex}=[circle, draw, inner sep=0pt, minimum size=6pt]
\DeclareMathOperator{\cl}{cl}
\begin{document}
\title[The Minors of Matroids with an Adjoint]{The Minors of Matroids with an Adjoint}

\author[K.~Grace]{Kevin Grace}
\address{Department of Mathematics and Statistics\\
University of South Alabama\\
Mobile, Alabama}
\email{kevingrace@southalabama.edu}

\subjclass{05B35}
\date{\today}

\begin{abstract}
  If $M$ is a matroid, then a simple matroid $M'$ with the same rank as $M$ is an \emph{adjoint} of $M$ if there is an inclusion-reversing embedding $\phi$ of the lattice of flats of $M$ into the lattice of flats of $M'$ such that $\phi$ maps the hyperplanes of $M$ bijectively onto the points of $M'$. In this note, we provide a proof that the class of matroids with an adjoint is minor-closed.
\end{abstract}

\maketitle

\section{Introduction}

The notion of an adjoint of a matroid seems to have first been introduced by Crapo \cite{c71}. However, as shown by Cheung \cite{Cheung}, not every matroid has an adjoint. Interest in adjoints of matroids has continued into recent years \cite{bc88,akw90,hk96,fjk23,ftw24}. It has been claimed that the class of matroids that have an adjoint is closed under the taking of minors \cite{hk96}; however, there does not seem to be an explicit proof of this in the literature. (However, see \cite{k95} and the note in the Acknowledgments below.)  In this note, we provide a proof of that fact. Additionally, combining \cref{prop:Closed-cont} with \cref{lem:closed-del} gives an explicit construction for finding an adjoint of a minor of $M$ when one knows an adjoint of $M$.

It should be noted that closure under taking of minors is in contrast to closure under duality. Alfter, Kern, and Wanka \cite{akw90} showed that the dual of a matroid with an adjoint can fail to have an adjoint itself.

The proof of our main result will be in \cref{sec:proof}. Before that, we recall some necessary matroid results in \cref{sec:Preliminaries}.

\section{Preliminaries}
\label{sec:Preliminaries}

We assume familiarity with matroid theory. Matroid terminology and notation not defined here will follow Oxley~\cite{o11}.

The precise definition of an adjoint of a matroid follows.
\begin{definition}
\label{def:adjoint_map}
    Let $M$ be a matroid, and let $M'$ be a simple matroid with the same rank as $M$. Let $\mathcal{F}(M)$ and $\mathcal{F}(M')$ be the collections of flats of $M$ and $M'$, respectively. Then $M'$ is an \emph{adjoint} of $M$ if there is a map $\phi:\mathcal{F}(M)\rightarrow\mathcal{F}(M')$ such that
    \begin{enumerate}
    \item $\phi$ is injective, 
    \item if $F_1,F_2\in\mathcal{F}(M)$ and $F_1\subseteq F_2$, then $\phi(F_2)\subseteq \phi(F_1)$, and
    \item the restriction of $\phi$ that maps hyperplanes of $M$ to points of $M'$ is bijective.
    \end{enumerate}
    We call $\phi$ an \emph{adjoint map} from $M$ to $M'$.
\end{definition}

The first statement in the next result was observed in \cite[Lemma 2.3]{bc88}, and the second statement was shown in \cite[Lemma 1]{hk96}. (Also, see \cite[Proposition 2.3]{ftw24}.)

\begin{theorem}
\label{thm:cited_theorem}
Let $M$ be a matroid with adjoint $M'$. The adjoint map $\phi:\mathcal{F}(M)\rightarrow\mathcal{F}(M')$ has the following properties:
    \begin{enumerate}
    \item \label{r-k} For each flat $F\in\mathcal{F}(M)$, $r_{M'}(\phi(F))=r(M)-r_M(F)$.
    \item \label{ind set} If $H_1,H_2,\dots,H_m$ are hyperplanes of $M$ such that \[\bigcap_{i=1}^{k+1} H_i\subsetneq\bigcap_{i=1}^{k} H_i\] for $1\leq k\leq m-1$, then $\{\phi(H_1),\phi(H_2),\dots,\phi(H_m)\}$ is an independent set in $M'$.
    \end{enumerate}
\end{theorem}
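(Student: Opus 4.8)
The plan is to prove the two parts in order, using part~(i) inside the proof of part~(ii).

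\emph{Part (i).} Write $r=r(M)=r(M')$; we may assume $r\ge 1$. Given a flat $F$ with $r_M(F)=k$, build a maximal chain of flats $\cl_M(\emptyset)=F_0\subsetneq F_1\subsetneq\cdots\subsetneq F_r=E(M)$ with $r_M(F_i)=i$ and $F_k=F$ (take a basis of $F$, extend it to a basis of $M$, and let $F_i$ be the closure of the first $i$ basis elements). Since $\phi$ is injective and inclusion-reversing, applying it to this chain produces two strict chains of flats of $M'$: one of length $k$ running from $\phi(F)$ up to $\phi(\cl_M(\emptyset))$, and one of length $r-k$ running from $\phi(E(M))$ up to $\phi(F)$. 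A strict chain of flats in a matroid gains rank at least one at each step, so the first chain gives $r_{M'}(\phi(F))\le r_{M'}(\phi(\cl_M(\emptyset)))-k\le r-k$ and the second gives $r_{M'}(\phi(F))\ge r_{M'}(\phi(E(M)))+(r-k)\ge r-k$. Taking $F=E(M)$ in the first inequality forces $r_{M'}(\phi(E(M)))=0$; feeding this back into the second inequality promotes it to $r_{M'}(\phi(F))\ge r-k$, and equality follows.

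\emph{Part (ii).} I would first record that the hypothesis is insensitive to the order of $H_1,\dots,H_m$: since intersecting a flat with a hyperplane lowers the rank by at most one, the strict-chain condition forces $r_M\big(\bigcap_{i=1}^k H_i\big)=r-k$ for every $k$, and then a short downward induction on $|S|$ shows $r_M\big(\bigcap_{i\in S}H_i\big)=r-|S|$ for every nonempty $S\subseteq\{1,\dots,m\}$; in particular every subcollection of $H_1,\dots,H_m$, in every order, again satisfies the chain condition. Now induct on $m$, the case $m=1$ being immediate. In the inductive step, each $(m-1)$-element subcollection satisfies the chain condition, so by the inductive hypothesis the corresponding $(m-1)$-element subset of $\{\phi(H_1),\dots,\phi(H_m)\}$ is independent in $M'$. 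Suppose for contradiction that $\{\phi(H_1),\dots,\phi(H_m)\}$ is dependent; choosing a point $p_i\in\phi(H_i)$ for each $i$ — the $p_i$ are distinct because the $H_i$ are distinct and $\phi$ is injective — the set $\{p_1,\dots,p_m\}$ is then a circuit of $M'$. Hence for every $j$ the flat $\cl_{M'}\big(\bigcup_{i\ne j}\phi(H_i)\big)$ contains all of $p_1,\dots,p_m$, so it equals the single rank-$(m-1)$ flat $P:=\cl_{M'}\big(\bigcup_{i=1}^m\phi(H_i)\big)$. On the other hand $\bigcap_{i\ne j}H_i$ is a flat contained in every $H_i$ with $i\ne j$, so $\phi(H_i)\subseteq\phi\big(\bigcap_{i\ne j}H_i\big)$ for $i\ne j$ and thus $P\subseteq\phi\big(\bigcap_{i\ne j}H_i\big)$; but by part~(i) this flat has rank $r-\big(r-(m-1)\big)=m-1$, so it equals $P$. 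Since this holds for every $j$ and $\phi$ is injective, $\bigcap_{i\ne j}H_i$ is one and the same flat $F$ for all $j$, whence $F\subseteq H_i$ for every $i$, so $F\subseteq\bigcap_{i=1}^m H_i$ — contradicting $r_M(F)=r-(m-1)>r-m=r_M\big(\bigcap_{i=1}^m H_i\big)$.

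The step I expect to be the main obstacle is the inductive step of part~(ii): showing directly that $\phi(H_m)$ avoids the span of $\phi(H_1),\dots,\phi(H_{m-1})$ does not follow from the inclusion-reversing property alone, so instead one exploits the stronger consequence of dependence — namely that the whole set is a \emph{circuit} — to force the rank-$(m-1)$ flats $\cl_{M'}\big(\bigcup_{i\ne j}\phi(H_i)\big)$ to coincide for all $j$; the injectivity of $\phi$ then collapses the genuinely distinct flats $\bigcap_{i\ne j}H_i$ of $M$ onto each other, producing the contradiction. Recording the order-independence of the hypothesis at the outset is exactly what makes all the $(m-1)$-element subcollections available to the inductive hypothesis.
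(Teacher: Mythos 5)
The paper does not actually prove this theorem --- it quotes part (i) from \cite{bc88} and part (ii) from \cite{hk96} --- so there is no in-paper argument to compare against. Judged on its own merits, your proof of part (i) is correct: applying the injective, inclusion-reversing $\phi$ to a maximal chain of flats through $F$ and counting rank drops along the two resulting strict chains in $M'$ is a clean and complete argument. Part (ii), however, has a genuine gap.

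The gap is the opening claim that the strict-chain hypothesis forces $r_M\bigl(\bigcap_{i=1}^k H_i\bigr)=r-k$, justified by the assertion that intersecting a flat with a hyperplane lowers the rank by at most one. That assertion is backwards: submodularity shows that if a flat $F$ is \emph{not} contained in a hyperplane $H$ then $r_M(F\cap H)\le r_M(F)-1$, and the drop can exceed one (the linear-algebra intuition $\dim(U\cap H)\ge\dim U-1$ holds for subspaces but not for flats of a general matroid). Concretely, in $U_{3,6}$ --- which is real-representable and hence has an adjoint --- the hyperplanes $H_1=\{1,2\}$ and $H_2=\{3,4\}$ satisfy the chain condition, yet $r(H_1\cap H_2)=0\ne r-2=1$. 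This false rank formula infects the argument in two places. First, your order-independence claim fails: in $U_{3,6}$ the triple $\{1,2\},\{1,3\},\{4,5\}$ satisfies the chain condition in that order but not in the order $\{1,2\},\{4,5\},\{1,3\}$. The fact you actually need --- that each $(m-1)$-element subcollection satisfies the chain condition \emph{in its induced order} --- is true, but requires the direct observation that $\bigcap_{i\le k,\,i\ne j}H_i\supseteq\bigcap_{i\le k+1}H_i\not\subseteq H_{k+2}$; so this part is repairable. Second, and fatally, the exact value $r_M\bigl(\bigcap_{i\ne j}H_i\bigr)=r-(m-1)$ is load-bearing where you conclude $P=\phi\bigl(\bigcap_{i\ne j}H_i\bigr)$: without it, part (i) only gives $r_{M'}\bigl(\phi\bigl(\bigcap_{i\ne j}H_i\bigr)\bigr)\ge m-1$, so $P$ may be a proper subflat, the flats $\phi\bigl(\bigcap_{i\ne j}H_i\bigr)$ need not coincide for different $j$, and the contradiction evaporates. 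Your argument does establish the theorem under the stronger hypothesis that $r_M\bigl(\bigcap_{i=1}^k H_i\bigr)=r-k$ for all $k$ --- which happens to hold in the paper's only application of part (ii), via \cref{pro:intersection-of-hyperplanes} --- but it does not prove the statement as given.
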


From \cite{o11}, we find several well-known results that will be used in the proof of our main result. The first of these results is from \cite[Proposition 3.3.7]{o11}.

\begin{proposition}
\label{prop:flats_after_contract_set}
    Let $M$ be a matroid. If $C\subseteq E(M)$, then $F$ is a flat of $M/C$ if and only if $F\cup C$ is a flat of $M$.
\end{proposition}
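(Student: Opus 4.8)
The plan is to translate both sides of the claimed equivalence into statements about the closure operator and then check that they coincide. Throughout, write $E = E(M)$, so that $E(M/C) = E \setminus C$; note in particular that if $F$ is a flat of $M/C$ then $F \subseteq E \setminus C$, so $F$ and $C$ are disjoint. The one external ingredient I would use is the standard description of closure under contraction: for $X \subseteq E \setminus C$, $\cl_{M/C}(X) = \cl_M(X \cup C) \setminus C$. If I did not want to cite this, I would derive it on the spot from the rank formula $r_{M/C}(Y) = r_M(Y \cup C) - r_M(C)$, valid for $Y \subseteq E \setminus C$: an element $e \in (E \setminus C) \setminus X$ lies in $\cl_{M/C}(X)$ exactly when $r_{M/C}(X \cup e) = r_{M/C}(X)$, which by the rank formula is exactly the condition $r_M(X \cup C \cup e) = r_M(X \cup C)$, i.e. $e \in \cl_M(X \cup C)$.

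Granting that, the rest is immediate. For the ``if'' direction, suppose $F \cup C$ is a flat of $M$ with $F \cap C = \emptyset$. Then $\cl_{M/C}(F) = \cl_M(F \cup C) \setminus C = (F \cup C) \setminus C = F$, so $F$ is a flat of $M/C$. For the ``only if'' direction, suppose $F$ is a flat of $M/C$; then $F \subseteq E \setminus C$ and $\cl_M(F \cup C) \setminus C = \cl_{M/C}(F) = F$. Since $C \subseteq \cl_M(F \cup C)$ always holds, adjoining $C$ to both sides gives $\cl_M(F \cup C) = F \cup C$, so $F \cup C$ is a flat of $M$.

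I do not anticipate a genuine obstacle here: the only things that have to be handled with any care are the bookkeeping that $F$ and $C$ are disjoint (so that deleting and re-adjoining $C$ is harmless) and the fact that $C$ is always contained in $\cl_M(F \cup C)$. If one prefers to bypass closure altogether, the same argument runs directly on ranks: $F$ is a flat of $M/C$ iff $r_{M/C}(F \cup e) > r_{M/C}(F)$ for every $e \in (E \setminus C) \setminus F$, and substituting the contraction rank formula turns this into $r_M(F \cup C \cup e) > r_M(F \cup C)$ for every $e \in E \setminus (F \cup C)$ --- which is exactly the assertion that $F \cup C$ is a flat of $M$. (Of course, in the present paper this is simply quoted from Oxley, so no proof is needed; the above is how one would supply it.)
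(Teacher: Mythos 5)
Your proof is correct. The paper offers no proof of this proposition --- it is quoted directly from Oxley [Proposition 3.3.7] --- so there is nothing to compare against; your argument via the contraction closure formula $\cl_{M/C}(X) = \cl_M(X\cup C)\setminus C$ (itself derived from the rank formula for contraction) is the standard textbook derivation, and you handle the two points that need care, namely the disjointness of $F$ and $C$ and the containment $C\subseteq\cl_M(F\cup C)$, exactly as one should.
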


Next, we have \cite[Lemma 3.3.2]{o11}.
 
\begin{lemma}
\label{lem:ind-coind}
Every minor of a matroid $M$ can be written in the form $M/C\backslash D$, where $C$ and $D$ are independent and coindependent, respectively, in $M$.
\end{lemma}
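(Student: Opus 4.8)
The plan is to begin with an arbitrary minor, written as $M/C_0\backslash D_0$ with $C_0\cap D_0=\emptyset$, and to make two successive adjustments: first shrink the contracted set to an independent set, and then enlarge the contracted set again (shrinking the deleted set correspondingly) so that the deleted set becomes coindependent, while checking that the second step does not spoil the first. For the first step I would take $C$ to be a maximal independent subset of $C_0$; then $C_0\setminus C\subseteq\cl_M(C)$, so each element of $C_0\setminus C$ is a loop of $M/C$, and since contracting a loop equals deleting it we get $M/C_0=(M/C)\backslash(C_0\setminus C)$. Thus the minor equals $(M/C)\backslash D_1$, where $D_1:=D_0\cup(C_0\setminus C)$ and $C$ is independent in $M$.

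For the second step, working inside $M_1:=M/C$, I would let $D$ be a maximal coindependent subset of $D_1$; equivalently, $D$ is a basis of the restriction $M_1^{*}|D_1$. Then every element of $D_1\setminus D$ lies in $\cl_{M_1^{*}}(D)$, i.e.\ is a coloop of $M_1\backslash D$, so (deleting a coloop being the same as contracting it, and minor operations on disjoint sets commuting)
\[
(M/C)\backslash D_1=(M_1\backslash D)\backslash(D_1\setminus D)=(M_1\backslash D)/(D_1\setminus D)=M/\bigl(C\cup(D_1\setminus D)\bigr)\backslash D .
\]
So the minor has been rewritten as $M/C'\backslash D$, where $C':=C\cup(D_1\setminus D)$.

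It then remains to check that $C'$ is independent and $D$ is coindependent in $M$. Coindependence of $D$ is immediate, since $(M/C)^{*}=M^{*}|(E(M)\setminus C)$ and so a subset of $E(M)\setminus C$ is coindependent in $M/C$ if and only if it is coindependent in $M$. The one point that needs genuine attention --- and the only real obstacle in the argument --- is that the elements pushed into the contraction, $D_1\setminus D$, form an independent set in $M/C$: because $D$ is a basis of $M_1^{*}|D_1$, its complement $D_1\setminus D$ in $D_1$ is a basis of the dual matroid $(M_1^{*}|D_1)^{*}=M_1/(E(M_1)\setminus D_1)$, hence is independent in $M_1=M/C$. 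Combining this with the independence of $C$ in $M$ (and with $C\cap(D_1\setminus D)=\emptyset$) shows that $C'$ is independent in $M$, which completes the proof. Apart from this bookkeeping, the argument only uses the standard facts that contracting a loop, respectively deleting a coloop, agrees with deleting it, respectively contracting it.
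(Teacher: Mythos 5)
Your proof is correct. The paper does not prove this lemma itself --- it simply cites it as \cite[Lemma 3.3.2]{o11} --- and your argument (shrink the contracted set to a basis of $C_0$, push the resulting loops into the deleted set, then shrink the deleted set to a basis of the dual restriction and push the resulting coloops back into the contracted set, checking independence and coindependence at the end) is essentially the standard proof given in Oxley. All the individual steps, including the one you flag as needing genuine attention (that $D_1\setminus D$ is independent in $M/C$, via duality of bases of $M_1^{*}|D_1$), check out.
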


Finally, the next result comes from \cite[Proposition 1.7.8]{o11} and additional details implicit in its proof.

\begin{proposition}
\label{pro:intersection-of-hyperplanes} 
Let $X$ be a flat of rank $k<r$ in a matroid $M$ of rank $r$. Then there is a collection of hyperplanes $\{H_1,H_2,\dots,H_{r-k}\}$ whose intersection is $X$. Moreover, \[\bigcap_{i=1}^{j+1} H_i\subsetneq\bigcap_{i=1}^{j} H_i\] for $1\leq j\leq r-k-1$.
\end{proposition}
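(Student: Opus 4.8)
The plan is to argue by induction on $r-k$. If $r-k=1$ then $X$ is a hyperplane and $H_1=X$ works, the displayed condition being vacuous; so assume $r-k\geq 2$, in which case $X$ is a proper flat that is not a hyperplane. The one ingredient I would lean on is the standard fact that every proper flat of a matroid is the intersection of the hyperplanes containing it. From this I would extract the following separation statement: if $A\subsetneq B$ are flats of $M$ with $B\neq E(M)$ and $r_M(B)=r_M(A)+1$, then some hyperplane $H$ satisfies $A\subseteq H$ and $B\not\subseteq H$ --- otherwise every hyperplane containing $A$ would contain $B$, forcing $A\supseteq B$. For such an $H$ we get $A\subseteq H\cap B\subsetneq B$, and since $r_M(B)=r_M(A)+1$ this pins down $r_M(H\cap B)=r_M(A)$, whence $H\cap B=A$.

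For the inductive step, I would pick $e\in E(M)\setminus X$ and set $X'=\cl_M(X\cup\{e\})$, a flat with $X\subsetneq X'$ and $r_M(X')=k+1\leq r-1$. Since $r-(k+1)=r-k-1\geq 1$, the inductive hypothesis applied to $X'$ gives hyperplanes $H_1,\dots,H_{r-k-1}$ with $\bigcap_{i=1}^{r-k-1}H_i=X'$ and $\bigcap_{i=1}^{j+1}H_i\subsetneq\bigcap_{i=1}^{j}H_i$ for $1\leq j\leq r-k-2$. Applying the separation statement with $A=X$ and $B=X'$ produces a hyperplane $H_{r-k}$ with $X\subseteq H_{r-k}$ and $X'\cap H_{r-k}=X$. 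Then $\bigcap_{i=1}^{r-k}H_i=X'\cap H_{r-k}=X$, and the chain of strict containments holds for $1\leq j\leq r-k-2$ by the inductive hypothesis and for $j=r-k-1$ because $\bigcap_{i=1}^{r-k}H_i=X\subsetneq X'=\bigcap_{i=1}^{r-k-1}H_i$. This closes the induction.

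The only real content is the fact that a proper flat equals the intersection of the hyperplanes above it; I would either cite this or insert the short argument (given $x\notin X$, take a flat $F$ with $X\subseteq F$ that is maximal among flats not containing $x$, and use submodularity of the rank to conclude $F$ must be a hyperplane). Everything else is rank bookkeeping. I expect the only place to be careful is the indexing: one must add the separating hyperplane $H_{r-k}$ \emph{last} so that the strictly decreasing chain lands in exactly the range $1\leq j\leq r-k-1$, rather than, say, threading $H_{r-k}$ into the middle of the chain coming from $X'$.
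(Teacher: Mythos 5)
Your argument is correct. Note, however, that the paper does not prove this proposition at all: it is quoted from Oxley's \emph{Matroid Theory} (Proposition 1.7.8 together with details implicit in its proof), where the standard argument is constructive rather than inductive --- one extends a basis $B_X$ of $X$ to a basis $B$ of $M$ and takes $H_i=\cl_M(B-b_i)$ for the $r-k$ elements $b_i\in B-B_X$, so that the partial intersections form the required strictly decreasing chain of flats down to $X$. Your route instead inducts on the corank $r-k$, using the separation fact that a flat $A$ covered by a flat $B\neq E(M)$ lies on a hyperplane avoiding some element of $B$; the one ingredient you import, that a proper flat is the intersection of all hyperplanes containing it, is logically independent of the proposition (your maximal-flat sketch proves it directly), so there is no circularity. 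All the rank bookkeeping checks out: in the base case $r-k=1$ the flat $X$ is itself a hyperplane and the chain condition is vacuous, and in the inductive step appending the separating hyperplane as $H_{r-k}$ gives $\bigcap_{i=1}^{r-k}H_i=X'\cap H_{r-k}=X\subsetneq X'$, which supplies exactly the missing case $j=r-k-1$. The trade-off is that Oxley's construction is shorter and exhibits the hyperplanes explicitly, while yours is self-contained modulo a more elementary standard fact; either is adequate for the use made of the proposition in \cref{rank-claim}.
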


\section{The Proof}
\label{sec:proof}

The next proposition follows fairly easily from the fact that the lattice of flats of a matroid $M/C$ is isomorphic to the interval of the lattice of flats of $M$ between $C$ and $E(M)$.

\begin{proposition}
\label{prop:Closed-cont}
Let $M$ be a matroid with an adjoint map $\phi$ to $M'$, and let $C\subseteq E(M)$. Then there is an adjoint map $\phi_{/C}$ from $M/C$ to $M'|\phi(\textnormal{cl}_M(C))$ given by $\phi_{/C}(F)=\phi(F\cup C)$, for each flat $F\in\mathcal{F}(M/C)$.
\end{proposition}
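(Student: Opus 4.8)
The plan is to verify the three conditions of \cref{def:adjoint_map} for $\phi_{/C}$, using throughout the fact (a consequence of \cref{prop:flats_after_contract_set}) that $F \mapsto F \cup C$ is an inclusion-preserving bijection from $\mathcal{F}(M/C)$ onto the set of flats of $M$ containing $C$, equivalently onto the flats of $M$ containing $\cl_M(C)$. First I would record the easy preliminaries: $\cl_M(C)$ is a flat of $M$, so $\phi(\cl_M(C))$ is a flat of $M'$ and hence $M'|\phi(\cl_M(C))$ makes sense and is simple; and by the rank formula in \cref{thm:cited_theorem}, $r_{M'}(\phi(\cl_M(C))) = r(M) - r_M(C) = r(M/C)$, so $M'|\phi(\cl_M(C))$ has the required rank. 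If $F$ is a flat of $M/C$, then $F \cup C$ is a flat of $M$ containing $\cl_M(C)$, so $\phi(F \cup C) = \phi_{/C}(F)$ is a flat of $M'$ which, by condition~(ii) for $\phi$, is contained in $\phi(\cl_M(C))$; hence it is a flat of $M'|\phi(\cl_M(C))$, and $\phi_{/C}$ is well defined.

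Conditions (i) and (ii) for $\phi_{/C}$ are then routine. Injectivity: $\phi(F_1 \cup C) = \phi(F_2 \cup C)$ forces $F_1 \cup C = F_2 \cup C$ by injectivity of $\phi$, and then $F_1 = F_2$ since $F_1, F_2 \subseteq E(M) \setminus C$. Inclusion-reversal: $F_1 \subseteq F_2$ gives $F_1 \cup C \subseteq F_2 \cup C$, hence $\phi(F_2 \cup C) \subseteq \phi(F_1 \cup C)$.

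The real content is condition (iii). A flat $F$ of $M/C$ is a hyperplane of $M/C$ exactly when $F \cup C$ is a hyperplane of $M$: since $r_{M/C}(F) = r_M(F \cup C) - r_M(C)$ and $r(M/C) = r(M) - r_M(C)$, the condition $r_{M/C}(F) = r(M/C) - 1$ is equivalent to $r_M(F \cup C) = r(M) - 1$, and such hyperplanes $F \cup C$ of $M$ contain $\cl_M(C)$. Since $\phi$ maps hyperplanes of $M$ to points of $M'$ and $M'$ is simple, the points of $M'|\phi(\cl_M(C))$ are precisely the points of $M'$ contained in the flat $\phi(\cl_M(C))$. So, via the bijection above, condition (iii) for $\phi_{/C}$ reduces to showing that $\phi$ restricts to a bijection from the hyperplanes of $M$ containing $\cl_M(C)$ onto the points of $M'$ contained in $\phi(\cl_M(C))$. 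This restriction is injective (inherited from $\phi$) and lands in the right set, since $\cl_M(C) \subseteq H$ implies $\phi(H) \subseteq \phi(\cl_M(C))$. What remains, and this is the main obstacle, is surjectivity: given a point $p \subseteq \phi(\cl_M(C))$, write $p = \phi(H)$ for the unique hyperplane $H$ of $M$ with $\phi(H) = p$, and show $\cl_M(C) \subseteq H$.

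I would get this from the key fact that $\phi$ \emph{reflects} inclusions between flats and hyperplanes: \emph{if $F$ is a flat of $M$, $H$ is a hyperplane of $M$, and $\phi(H) \subseteq \phi(F)$, then $F \subseteq H$.} To prove it, let $k = r_M(F)$. If $k = r(M)$ then $F = E(M)$, so $\phi(F)$ has rank $0$ while $\phi(H)$ is a point, and the hypothesis cannot hold. If $k < r(M)$ and $F \not\subseteq H$, then $\cl_M(F \cup H) = E(M)$, because $H$ is a coatom of the lattice of flats of $M$. By \cref{pro:intersection-of-hyperplanes} there are hyperplanes $G_1, \dots, G_{r(M)-k}$ of $M$ with $\bigcap_i G_i = F$ and strictly decreasing partial intersections; appending $H$ to this list keeps the partial intersections strictly decreasing, as $\bigl(\bigcap_i G_i\bigr) \cap H = F \cap H \subsetneq F$. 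The independence statement in \cref{thm:cited_theorem} then makes $\{\phi(G_1), \dots, \phi(G_{r(M)-k}), \phi(H)\}$ an independent set of $M'$ with $r(M) - k + 1$ elements (the $G_i$ are pairwise distinct and distinct from $H$, and $\phi$ is injective). But $F \subseteq G_i$ gives $\phi(G_i) \subseteq \phi(F)$, while $\phi(H) \subseteq \phi(F)$ by hypothesis, so this independent set lies inside $\phi(F)$, a flat of rank $r(M) - k$ by the rank formula in \cref{thm:cited_theorem} --- a contradiction. Hence $F \subseteq H$, which completes condition (iii) and the proof.
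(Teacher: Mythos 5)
Your proposal is correct, and its overall skeleton (verify the three conditions of \cref{def:adjoint_map}, using the bijection $F \mapsto F \cup C$ between flats of $M/C$ and flats of $M$ containing $C$) matches the paper's proof. The difference is in how condition (iii) is handled. The paper disposes of it in two sentences: it observes that hyperplanes of $M$ containing $C$ are sent by $\phi$ into the points of $M'$ lying in $\phi(\cl_M(C))$, and then asserts that ``by (iii), this is done bijectively.'' That assertion hides exactly the point you isolate as the main obstacle: injectivity and containment in the target are immediate, but surjectivity requires knowing that if a point $p \subseteq \phi(\cl_M(C))$ equals $\phi(H)$ for a hyperplane $H$, then $H \supseteq \cl_M(C)$ --- i.e., that $\phi$ \emph{reflects} inclusions between flats and hyperplanes, not merely preserves (reverses) them. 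You supply a genuine proof of this reflection property, by expressing a flat $F$ as an intersection of $r(M)-r_M(F)$ hyperplanes with strictly decreasing partial intersections (\cref{pro:intersection-of-hyperplanes}), appending $H$ when $F \not\subseteq H$, and deriving from \cref{thm:cited_theorem} an independent set of size $r(M)-r_M(F)+1$ inside the flat $\phi(F)$ of rank $r(M)-r_M(F)$, a contradiction. (This is essentially the same machinery the paper deploys later, but only in the deletion lemma.) You also explicitly check the rank and simplicity of $M'|\phi(\cl_M(C))$, which the definition of adjoint requires and the paper leaves implicit. In short: same decomposition, but your treatment of (iii) is more complete than the paper's, at the cost of an extra lemma; the paper's version is shorter but leans on the reader to see why the restricted map is onto.
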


\begin{proof}
It follows from \cref{prop:flats_after_contract_set} that the map $\phi_{/C}$ is well-defined. We need to verify that $\phi_{/C}$ satisfies the three properties of \cref{def:adjoint_map}. To verify (i), note that every flat of $M/C$ is disjoint from $C$. Therefore, if $F_1$ and $F_2$ are flats of $M/C$ such that $F_1\cup C=F_2\cup C$, we must have $F_1=F_2$. Therefore, since $\phi$ is injective, so is $\phi_{/C}$.

Since $\phi$ satisfies (ii) of \cref{def:adjoint_map}, it is clear that $\phi_{/C}$ does as well. 

Now, to verify (iii), note that a set $H$ is a hyperplane of $M/C$ if and only if $H\cup C$ is a hyperplane of $M$. (This is \cite[3.1.19]{o11}.) Because of property (ii) of \cref{def:adjoint_map}, the hyperplanes of $M$ that contain $C$ are mapped by $\phi$ to the points of $M'$ that are contained in $\phi(\textnormal{cl}_M(C))$. Moreover, by (iii), this is done bijectively. Therefore, the restriction of $\phi$ to the hyperplanes of $M$ containing $C$ is a bijective map to the set of points of $M'$ contained in $\phi(\textnormal{cl}_M(C))$. The points of $M'$ contained in $\phi(\textnormal{cl}_M(C))$ are, of course, the points of $M'|\phi(\textnormal{cl}_M(C))$. This verifies (iii) and completes the proof.
\end{proof}

Although it is straightforward to show that the class of matroids with an adjoint is closed under contraction, it requires a bit more work to show that it is closed under deletion. If $M$ is a matroid and $D\subseteq E(M)$, then the collection of flats of $M\backslash D$ consists of the sets $F\subseteq E(M)-D$ such that there is a flat $F'$ of $M$ with $F=F'-D$ (see \cite[Proposition 3.3.7(ii)]{o11}). Now, suppose that $M$ has two flats $F'\subsetneq F''$ such that $F=F'-D=F''-D$. Then $r_{M\backslash D}(F)=r_M(F)\leq r_M(F')<r_M(F'')$. Thus, since $r(F''-D)<r(F'')$, the effect of deletion of $D$ on $F''$ is that it causes $F''$ to ``disappear'' from the lattice of flats. The next definition says that $\mathcal{H}(M,D)$ consists of the hyperplanes of $M$ that ``disappear'' from the lattice of flats when $D$ is deleted. \begin{definition}
    Suppose $M$ is a matroid and $D\subseteq E(M)$. If $\mathcal{H}(M)$ is the set of hyperplanes of $M$, let $\mathcal{H}(M,D)=\{H\in\mathcal{H}(M):r_M(H-D)<r_M(H)\}$.
\end{definition} Rewording this definition yet one more way, $\mathcal{H}(M,D)$ is the set of hyperplanes $H$ of $M$ for which $H\cap D$ is a codependent set in the restriction $M|H$.

If $M$ has an adjoint $M'$, the points of $M'$ correspond to the hyperplanes of $M$. Therefore, when deleting $D$ from $M$, it is intuitive to think that we can get an adjoint of $M\backslash D$ by deleting $\mathcal{H}(M,D)$ from $M'$. However, it is not immediately clear that each flat of $M\backslash D$ has a distinct flat of $M'\backslash\mathcal{H}(M,D)$ to which to be mapped. This is what the next lemma establishes.

\begin{lemma}
\label{lem:closed-del}
    Let $M$ be a matroid with an adjoint map $\phi$ to $M'$. If $D$ is a coindependent set in $M$, then there is an adjoint map $\phi_{\backslash D}$ from $M\backslash D$ to $N=M'\backslash\phi(\mathcal{H}(M,D))$ given by $\phi_{\backslash D}(F)=\phi(\cl_M(F))-\phi(\mathcal{H}(M,D))$ for flats $F$ of $M\backslash D$. 
\end{lemma}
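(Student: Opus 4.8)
The plan is to verify the three conditions of \cref{def:adjoint_map} for the map $\phi_{\backslash D}$, together with the unstated requirement that $N$ be a simple matroid of the same rank as $M\backslash D$. Write $r = r(M)$ and regard $P := \phi(\mathcal{H}(M,D))$ as a subset of $E(M')$, so that $N = M'\backslash P$; since $D$ is coindependent we have $r(M\backslash D) = r$, and $N$ is simple as a restriction of $M'$. I would begin with the bookkeeping observation that the hyperplanes of $M\backslash D$ are exactly the sets $H - D$ with $H$ a hyperplane of $M$ not belonging to $\mathcal{H}(M,D)$, that $H = \cl_M(H-D)$ for each such $H$, and that $F\subseteq H - D$ if and only if $\cl_M(F)\subseteq H$ for a flat $F$ of $M\backslash D$; all of this is a direct rank computation from the description of the flats of a deletion (\cite[Proposition 3.3.7(ii)]{o11}) and the coindependence of $D$. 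In particular, for a flat $F$ of $M\backslash D$ the set $\cl_M(F)$ is a hyperplane of $M$ outside $\mathcal{H}(M,D)$ precisely when $F$ is a hyperplane of $M\backslash D$, so $\phi_{\backslash D}$ is well defined and lands in the flats of $N$.

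The technical core is the following description: for every flat $X$ of $M$, the points of $M'$ contained in $\phi(X)$ are exactly the points $\phi(H)$ as $H$ runs over the hyperplanes of $M$ with $X\subseteq H$. The inclusion ``$\supseteq$'' is immediate from condition (ii) of \cref{def:adjoint_map} and part~(i) of \cref{thm:cited_theorem}, the latter showing each such $\phi(H)$ has rank $1$. For ``$\subseteq$'', suppose some point $\phi(H_0)$ of $M'$ lies in $\phi(X)$ but $X\not\subseteq H_0$; put $k = r_M(X)$ (so $k < r$), use \cref{pro:intersection-of-hyperplanes} to pick hyperplanes $H_1,\dots,H_{r-k}$ of $M$ with $\bigcap_{i=1}^{r-k}H_i = X$ and strictly decreasing partial intersections, and note that appending $H_0$ keeps the partial intersections strictly decreasing, which is exactly where $X\not\subseteq H_0$ is used. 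Then part~(ii) of \cref{thm:cited_theorem} makes $\{\phi(H_0),\phi(H_1),\dots,\phi(H_{r-k})\}$ an independent set of size $r-k+1$ lying inside $\phi(X)$, which has rank $r-k$ by part~(i) of \cref{thm:cited_theorem} --- a contradiction. Since $M'$ is simple, a flat is the union of the points it contains, so this gives $\phi(X) = \bigcup\{\phi(H):H\text{ a hyperplane of }M,\ X\subseteq H\}$ and hence, subtracting $P$ and using injectivity of $\phi$ to read off that $\phi(H)\in P$ iff $H\in\mathcal{H}(M,D)$,
\[ \phi_{\backslash D}(F) \;=\; \phi(\cl_M(F)) - P \;=\; \bigcup\{\phi(H): \cl_M(F)\subseteq H,\ H\notin\mathcal{H}(M,D)\}. \]

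Given this, condition (ii) for $\phi_{\backslash D}$ is routine: $F_1\subseteq F_2$ forces $\cl_M(F_1)\subseteq\cl_M(F_2)$, hence $\phi(\cl_M(F_2))\subseteq\phi(\cl_M(F_1))$ by condition (ii) for $\phi$, and subtracting $P$ preserves this. For condition (iii), if $F$ is a hyperplane of $M\backslash D$ then $\cl_M(F)\notin\mathcal{H}(M,D)$, so $\phi(\cl_M(F))\notin P$, and $\phi_{\backslash D}(F) = \phi(\cl_M(F))$ is a point of $M'$ lying in $E(N)$, hence a point of $N$; the map $F\mapsto\phi(\cl_M(F))$ is then a bijection from the hyperplanes of $M\backslash D$ onto the points of $N$, being the composite of $F\mapsto\cl_M(F)$ (a bijection onto the hyperplanes of $M$ outside $\mathcal{H}(M,D)$, by the first paragraph) with the restriction of $\phi$ to those hyperplanes (a bijection onto the points of $M'$ outside $P$, by condition (iii) for $\phi$ and injectivity of $\phi$). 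Condition (i) is the crux, as the paper signals, and the displayed formula handles it: if $\phi_{\backslash D}(F_1) = \phi_{\backslash D}(F_2)$, then comparing the two unions and using injectivity of $\phi$ on hyperplanes shows $\cl_M(F_1)$ and $\cl_M(F_2)$ are contained in exactly the same hyperplanes of $M$ outside $\mathcal{H}(M,D)$; by the first paragraph this means $F_1$ and $F_2$ lie in exactly the same hyperplanes of $M\backslash D$, and since a proper flat of a matroid is the intersection of the hyperplanes containing it (and only the ground set lies in no hyperplane), $F_1 = F_2$.

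Finally, I would check $r(N) = r$, which is trivial if $r = 0$, so assume $r\geq 1$. Applying \cref{pro:intersection-of-hyperplanes} to the rank-$0$ flat of $M\backslash D$ produces hyperplanes $K_1,\dots,K_r$ of $M\backslash D$ with strictly decreasing partial intersections; set $H_i = \cl_M(K_i)$, a hyperplane of $M$ outside $\mathcal{H}(M,D)$ with $H_i - D = K_i$. Since $\bigcap_{i=1}^{j+1}H_i\subseteq\bigcap_{i=1}^{j}H_i$ while removing $D$ from the two sides gives $\bigcap_{i=1}^{j+1}K_i\subsetneq\bigcap_{i=1}^{j}K_i$, the containment is already strict before removing $D$, so part~(ii) of \cref{thm:cited_theorem} makes $\{\phi(H_1),\dots,\phi(H_r)\}$ independent in $M'$; these $r$ points lie in $E(M')-P = E(N)$, so $r(N)\geq r$ and therefore $r(N) = r = r(M\backslash D)$. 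The one genuinely nontrivial step is the point-description lemma of the second paragraph --- specifically its ``$\subseteq$'' direction --- everything else being bookkeeping once that is in hand.
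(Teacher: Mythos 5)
Your proof is correct, and while it shares the paper's skeleton --- the bookkeeping for conditions (ii) and (iii) is essentially identical, and both arguments get all their mileage out of \cref{pro:intersection-of-hyperplanes} combined with \cref{thm:cited_theorem} --- the way you handle injectivity is genuinely different. The paper's key lemma is a rank computation (\cref{rank-claim}: $r_N(\phi_{\backslash D}(F))=r-r_{M\backslash D}(F)$), from which injectivity is extracted by intersecting two offending flats of $M'$ and comparing ranks. Your key lemma is instead a point-description of image flats --- the points of $M'$ lying in $\phi(X)$ are exactly the $\phi(H)$ with $H$ a hyperplane of $M$ containing $X$, proved by appending a putative extra hyperplane to the chain from \cref{pro:intersection-of-hyperplanes} so that part (ii) of \cref{thm:cited_theorem} produces an independent set too large for the rank bound of part (i) --- after which injectivity reduces to the classical fact that a flat is determined by the hyperplanes containing it. Your route is self-contained at precisely the spot where the paper's is delicate: the paper's contradiction argument sets $F_1=F\cap F'$ and then treats $F_1$ as a member of $(\phi\circ\alpha)(\mathcal{F}(M\backslash D))$ without justification (and its inequality $r(F_1)<r(F_2)$ needs a word in the case $F\subsetneq F'$), whereas your argument never needs an intersection of image flats to again be an image flat. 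What the paper's route buys in exchange is the explicit formula $r_N(\phi_{\backslash D}(F))=r-k$, the analogue of part (i) of \cref{thm:cited_theorem} for the new adjoint map, which your proof does not record except in the case $k=0$ that you use to verify $r(N)=r(M\backslash D)$ --- a check you rightly make explicit and the paper leaves implicit in its Claim.
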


\begin{proof}
Since the closure of a set is a flat, it follows that the map $\phi_{\backslash D}$ is well-defined. We need to verify that $\phi_{\backslash D}$ satisfies the three properties of \cref{def:adjoint_map}. Since $\phi$ satisfies (ii) of \cref{def:adjoint_map}, it is clear that $\phi_{\backslash D}$ does as well.

Since $D$ is coindependent, the ranks of $M$ and $M\backslash D$ are equal. Therefore, the ranks of their hyperplanes are equal. Thus, for each hyperplane $H$ of $M\backslash D$, we have that $\cl_M(H)$ is a hyperplane of $M$. Note that $H=\cl_M(H)-D$. Since $\cl_M(H)$ and $H$ have the same rank, $\cl_M(H)\notin\mathcal{H}(M,D)$. Therefore, $\phi_{\backslash D}(H)=\phi(\cl_M(H))\in E(N)$. Thus, since $\phi$ satisfies (iii) of \cref{def:adjoint_map}, it follows that $\phi_{\backslash D}$ does as well.

Now, we must show that $\phi_{\backslash D}$ is injective. Let $\alpha:\mathcal{F}(M\backslash D)\rightarrow\mathcal{F}(M)$ be defined by $\alpha(F)=\cl_M(F)$, and note that $\phi_{\backslash D}(F)=(\phi\circ\alpha)(F)-\phi(\mathcal{H}(M,D))$. Also, note that $r_{M\backslash D}(F)=r_M(\alpha(F))$ for every flat $F$ of $M\backslash D$. Let $r=r(M)=r(M\backslash D)$.

\begin{claim}
\label{rank-claim}
    If $F$ is a flat of $M\backslash D$ of rank $k$, then $\phi_{\backslash D}(F)$ is a flat of $N$ of rank $r-k$.
\end{claim}

\begin{proof}
By \cref{pro:intersection-of-hyperplanes}, there are $r-k$ hyperplanes $H_1,H_2,\dots,H_{r-k}$ of $M\backslash D$, whose intersection is equal to $F$, such that \[\bigcap_{i=1}^{j+1} H_i\subsetneq\bigcap_{i=1}^{j} H_i\] for $1\leq j\leq r-k-1$. (If $F$ is the ground set of $M\backslash D$, we take $F$ to be the intersection of an empty collection of hyperplanes.) We also have that $\alpha(F)$ is the intersection of $\alpha(H_1),\alpha(H_2),\dots,\alpha(H_{r-k})$.

Since $r_{M}(\alpha(H_i))=r_{M\backslash D}(H_i)$ for each $i$, each $\alpha(H_i)$ is a hyperplane of $M$. Moreover, $\alpha(H_i)=\cl_M(H_i)$, and $H_i=\alpha(H_i)-D$. Since the ranks of $H_i$ and $\alpha(H_i)$ are equal, $\alpha(H_i)\notin\mathcal{H}(M,D)$. Therefore, for each $i$, we have that $(\phi\circ\alpha)(H_i)$ is a point in both $M'$ and $N=M'\backslash\phi(\mathcal{H}(M,D))$. Since $\alpha(F)\subseteq\alpha(H_i)$ for each $i$, and since $\phi$ is inclusion-reversing, we have $(\phi\circ\alpha)(H_i)\subseteq(\phi\circ\alpha)(F)$ for each $i$. Therefore, $\{(\phi\circ\alpha)(H_1),\dots,(\phi\circ\alpha)(H_{r-k})\}\subseteq(\phi\circ\alpha)(F)-\phi(\mathcal{H}(M,D))$. 

By \cref{thm:cited_theorem}\ref{r-k}, we have $r_{M'}((\phi\circ\alpha)(F))=r-k$. It follows from \cref{thm:cited_theorem}\ref{ind set} that $r_{M'}(\{(\phi\circ\alpha)(H_1),(\phi\circ\alpha)(H_2),\dots,(\phi\circ\alpha)(H_{r-k})\})=r-k$.
Therefore, \[r_N(\phi_{\backslash D}(F))=r_N((\phi\circ\alpha)(F)-\phi(\mathcal{H}(M,D)))\geq r_{M'}(\{(\phi\circ\alpha)(H_1),\dots,(\phi\circ\alpha)(H_{r-k})\})=r-k.\]

Also, since $\phi_{\backslash D}(F)=(\phi\circ\alpha)(F)-D$, we have \[r_N(\phi_{\backslash D}(F))\leq r_{M'}((\phi\circ\alpha)(F))=r-k.\] Therefore, $r_N(\phi_{\backslash D}(F))=r-k$, as claimed.
\end{proof}

Note that $\phi\circ\alpha$ is injective since $\alpha$ and $\phi$ are both injective. To show that $\phi_{\backslash D}$ is injective, it remains to show that distinct members of the set $(\phi\circ\alpha)(\mathcal{F}(M\backslash D))\subseteq\mathcal{F}(M')$ cannot result in the same flat of $N$ when the set $\phi(\mathcal{H}(M,D))$ is deleted. Suppose for a contradiction that this is not true. That is, there are distinct flats $F$ and $F'$ of $M'$, with $\{F,F'\}\subseteq(\phi\circ\alpha)(\mathcal{F}(M\backslash D))$ such that $F-\phi(\mathcal{H}(M,D))=F'-\phi(\mathcal{H}(M,D))$. Let $F_1=F\cap F’$ and $F_2=F$. Then $r(F_1)<r(F_2)$ and $F_1-\phi(\mathcal{H}(M,D))= F_2-\phi(\mathcal{H}(M,D))$. Let $X_1$ and $X_2$ be the flats of $M\backslash D$ such that $(\phi\circ\alpha)(X_1)=F_1$ and $(\phi\circ\alpha)(X_2)=F_2$. Then $r_{M\backslash D}(X_1)>r_{M\backslash D}(X_2)$. By \cref{rank-claim}, since $\phi_{\backslash D}(X_1)=F_1-\phi(\mathcal{H}(M,D))$ and $\phi_{\backslash D}(X_2)=F_2-\phi(\mathcal{H}(M,D))$, we have $r_N(F_1-\phi(\mathcal{H}(M,D)))<r_N(F_2-\phi(\mathcal{H}(M,D)))$. This contradicts the assumption that $F_1-\phi(\mathcal{H}(M,D))=F_2-\phi(\mathcal{H}(M,D))$. Therefore, we have proved that $\phi_{\backslash D}$ is injective, completing the proof of the lemma.
\end{proof}

We can now prove our main result.

\begin{theorem}
\label{thm:minor-closed}
The class of matroids with an adjoint is minor-closed.
\end{theorem}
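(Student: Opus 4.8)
The plan is to obtain \cref{thm:minor-closed} as a formal consequence of \cref{prop:Closed-cont}, \cref{lem:closed-del}, and \cref{lem:ind-coind}. Since having an adjoint is plainly preserved under isomorphism, it suffices to show that if a matroid $M$ has an adjoint, then so does every minor of $M$.

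So let $N$ be a minor of a matroid $M$ with an adjoint map $\phi$ to $M'$. By \cref{lem:ind-coind}, we may write $N = M/C\backslash D$ with $C$ independent in $M$ and $D$ coindependent in $M$; since deletion and contraction of disjoint subsets commute, $N = (M\backslash D)/C$. First I would apply \cref{lem:closed-del}: because $D$ is coindependent in $M$, this produces an adjoint map $\phi_{\backslash D}$ from $M\backslash D$ to $M'\backslash\phi(\mathcal{H}(M,D))$. Next, viewing $C$ as a subset of $E(M\backslash D)$, I would apply \cref{prop:Closed-cont} to the matroid $M\backslash D$ equipped with the adjoint $\phi_{\backslash D}$; this yields an adjoint map $(\phi_{\backslash D})_{/C}$ from $(M\backslash D)/C = N$ onto the restriction of $M'\backslash\phi(\mathcal{H}(M,D))$ to $\phi_{\backslash D}(\cl_{M\backslash D}(C))$. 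Hence $N$ has an adjoint, which is what we wanted.

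There is essentially nothing left to overcome once \cref{prop:Closed-cont} and \cref{lem:closed-del} are in hand; the only point that repays a moment's care is the \emph{order} in which the two minor operations are carried out. Performing the deletion first lets us invoke \cref{lem:closed-del} with $D$ coindependent in $M$ exactly as its hypothesis demands, while \cref{prop:Closed-cont} places no restriction on the contracted set, so the composition goes through with no further checking. One could instead contract first, but then one must verify that a set coindependent in $M$ stays coindependent in $M/C$ when $C$ is independent --- a one-line rank computation using $r(M\backslash D) = r(M)$ and $r_{M\backslash D}(C) = r_M(C)$ --- before applying \cref{lem:closed-del} to $M/C$. Finally, unwinding the two constructions delivers the explicit statement promised in the introduction: it exhibits a concrete adjoint of $N$ built from a given adjoint $M'$ of $M$, obtained by first deleting $\phi(\mathcal{H}(M,D))$ from $M'$ and then restricting to $\phi_{\backslash D}(\cl_{M\backslash D}(C))$.
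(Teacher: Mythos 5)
Your proof is correct and takes essentially the same approach as the paper's: both reduce the theorem to \cref{lem:ind-coind} combined with \cref{prop:Closed-cont} and \cref{lem:closed-del}. The only difference is the order of operations --- the paper contracts $C$ first and then observes that $D$ remains coindependent in $M/C$ before deleting, whereas you delete $D$ first so that the coindependence hypothesis of \cref{lem:closed-del} is satisfied in $M$ itself; both orderings go through.
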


\begin{proof}
    Let $M$ be a matroid with an adjoint, and let $N$ be a minor of $M$. We need to show that $N$ has an adjoint. By \cref{lem:ind-coind}, there are disjoint subsets $C$ and $D$ of $E(M)$ such that $C$ is independent in $M$, such that $D$ is coindependent in $M$, and such that $N=M/C\backslash D$. From \cref{prop:Closed-cont}, it follows that $M/C$ has an adjoint.
    
    Contraction of any set does not change the coindependence of a disjoint set. Therefore, $D$ is a coindependent set in $M/C$. From \cref{lem:closed-del}, it follows that $N=M/C\backslash D$ has an adjoint.
\end{proof}

\section*{Acknowledgments} The author thanks Louis Deaett for introducing him to the notion of an adjoint of a matroid, as well as for many helpful discussions.

The author also thanks Winfried Hochst\"attler for pointing him toward \cite{k95}. Winfried Hochst\"attler also informed the author that, in \cite{k95}, it is shown that, if $\phi$ is an adjoint map from $M$ to $M'$, and if $X$ and $Y$ are flats of $M$, then $\phi(X)$ and $\phi(Y)$ are a modular pair in $M'$. It is claimed that arguments similar to those used in the proof of this fact can be used to show that a minor of a matroid with an adjoint also has an adjoint itself.
\bibliographystyle{alpha}
\bibliography{main.bib}

\end{document}